\algrenewcommand\algorithmicindent{2mm}%
\newtheorem{theorem}{Theorem}
\newtheorem{assumption}{Assumption}
\newcommand{\xta}[1]{\tilde{x}^{(#1)}}
\newcommand{\xt}[1]{\tilde{x}^{(#1)}}
\newcommand{\xs}[1]{x^{(s_{#1})}}
\newcommand{\xsh}[1]{\hat{x}^{(s_{#1})}}
\newcommand{\xopt}{x^{\star}}
\newcommand{\N}[1]{\mathcal{N}_{#1}}
\newcommand{\htil}[1]{\tilde{h}^{(#1)}}
\newcommand{\lt}{\ell}
\newcommand{\norm}[1]{\|#1\|}
\newcommand{\abs}[1]{|#1|}
\newcommand{\del}{\nabla}
\newcommand{\eqdef}{\ensuremath{\triangleq}}
\newcommand{\A}{\ensuremath{\mathcal{A}}}
\newcommand{\B}{\ensuremath{\mathcal{B}}}
\newcommand{\expec}[1]{\textbf{E}\left[#1\right]}
\newcommand{\ex}{\textbf{E}}
\newcommand{\mR}{\mathbb{R}}
\newcommand{\tp}{\ensuremath{^{\mathsf{T}}}}
\newcommand{\xc}[1]{\ensuremath{x}^{(#1)}}
\newcommand{\ec}[1]{\ensuremath{e^{(#1)}}}
\newcommand{\zbar}{\overline{z}}
\newcommand{\qerr}[1]{\ensuremath{\varepsilon(#1)}}
\newcommand{\prox}[1]{\ensuremath{\text{prox}_{\eta R}(#1)}}
\newcommand{\vc}[1]{\ensuremath{v}^{(#1)}}
\newcommand{\cA}[1]{\ensuremath{\mathcal{A}_{#1}}}
\newcommand{\xth}[2]{\hat{\tilde{x}}^{(#1)}_{#2}}
\newcommand{\cN}[1]{\ensuremath{\mathcal{N}_{#1}}}
\newcommand{\ani}[1]{a_{i}^{(#1)}}
\newcommand{\bni}[1]{b_{i}^{(#1)}}
\newcommand{\cni}[1]{c_{i}^{(#1)}}
\newcommand{\dni}[1]{d_{i}^{(#1)}}
\newcommand{\Uas}[1]{U_{a,i}^{(#1)}}
\newcommand{\Ubs}[1]{U_{b,i}^{(#1)}}
\newcommand{\Ucs}[1]{U_{c,i}^{(#1)}}
\newcommand{\Uds}[1]{U_{d,i}^{(#1)}}
\newcommand{\Lmax}{\overline{L}}
\newcommand{\qgradi}[1]{\hat{\del} f_i^{(#1)}}
\newcommand{\gradi}[1]{{\del} f_i^{(#1)}}
\newcommand{\qgrad}[2]{\hat{\del} f_{#1}^{(#2)}}
\newcommand{\gc}[1]{\ensuremath{\tilde{g}}^{(#1)}}
\newcommand{\wc}[1]{\ensuremath{w}^{(#1)}}
\newcommand{\xplus}{x^{+}}
\newcommand{\hc}[1]{\ensuremath{h}^{(#1)}}
\newcommand{\amid}{\overline{x}_{a,i}^{(s)}}
\newcommand{\bmid}{\overline{\del}f_{b,i}^{(s)}}
\newcommand{\cmid}{\overline{x}_{c,i}^{(s)}}
\newcommand{\dmid}{\overline{\del}f_{d,i}^{(s)}}
\newcommand{\Qai}{Q_{a,i}^{(s)}}
\newcommand{\Qbi}{Q_{b,i}^{(s)}}
\newcommand{\Qci}{Q_{c,i}^{(s_t)}}
\newcommand{\Qdi}{Q_{d,i}^{(s_t)}}
\DeclareMathOperator*{\argmin}{arg\,min}
\newtheorem{lemma}{Lemma}
\title{\LARGE \bf Distributed Semi-Stochastic
Optimization with Quantization Refinement}
\author{Neil McGlohon and Stacy Patterson
\thanks{*This work was funded in part by NSF grants 1553340 and 1527287.}
\thanks{N. McGlohon and S. Patterson are with the Department of Computer Science, Rensselaer Polytechnic Institute,
Troy, NY 12180, USA
       {\tt\small mcglon@rpi.edu}, {\tt\small sep@cs.rpi.edu}}%
}
\begin{document}

\maketitle
\thispagestyle{empty}
\pagestyle{empty}

\begin{abstract}
We consider the problem of regularized regression in a network of communication-constrained devices. 
Each node has local data and objectives, and the goal is for the nodes to optimize a global objective.
We develop a distributed optimization algorithm that is based on recent work on semi-stochastic proximal gradient methods.
Our algorithm employs iteratively refined quantization to limit message size.
We present theoretical analysis and conditions for the algorithm to achieve a linear convergence rate.  Finally, we demonstrate the performance of our algorithm through numerical simulations.
\end{abstract}

\section{Introduction}

We consider the problem of distributed optimization in a network where communication is constrained, for example a wireless sensor network.
In particular, we focus on problems where each node has local data and objectives, and the goal is for the nodes to learn a global objective that includes this local information.
Such problems arise in networked systems problems such as estimation, prediction, resource allocation, and control.

Recent works have proposed distributed optimization methods that reduce communication by using quantization. 
For example, in \cite{PY2015a}, the authors propose a distributed algorithm to solve unconstrained problems based on a centralized inexact proximal gradient method~\cite{SRB11}.
In~\cite{PY2015b}, the authors extend their work to constrained optimization problems. 
In these algorithms, the nodes compute a full gradient step in each iteration, requiring quantized communication between every pair of neighboring nodes.
Quantization has been applied in distributed consensus algorithms~\cite{AK2007,DT2013,RC2007}
and distributed subgradient methods~\cite{AN2008}.

In this work, we address the specific problem of distributed regression with regularization over the variables across all nodes.  Applications of our approach include
distributed compressed sensing, LASSO, group LASSO, and regression with Elastic Net regularization, among others.
Our approach is inspired by~\cite{PY2015a,PY2015b}.
We seek to further reduce per-iteration communication by using an approach based on a stochastic proximal gradient algorithm.
This approach only requires communication between a small subset of nodes in each iteration.  In general, stochastic gradients may suffer from slow convergence.
Thus any  per-iteration communication savings could be counter-acted by an extended number of iterations.  Recently, however, several works have proposed \emph{semi-stochastic}
gradient methods~\cite{NA2014,JR2013,xiao2014proximal}.  To reduce the variance of the iterates generated by a stochastic approach, these algorithms periodically incorporate a full gradient computation.  It has been shown that these algorithms achieve a linear rate of convergence to the optimal solution.

We propose a distributed algorithm for regularized regression based on the centralized semi-stochastic proximal gradient of~\cite{xiao2014proximal}. 
In most iterations, only a subset of nodes need communicate.
We further reduce communication overhead by employing quantized messaging.   Our approach reduces both the length of messages sent between nodes as well as the number of messages sent in total to converge to the optimal solution.
The detailed contributions of our work are as follows:
\begin{itemize}
\item We extend the centralized semi-stochastic proximal gradient algorithm to include errors in the gradient computations and show the convergence rate of this inexact algorithm.
\item We propose a distributed optimization algorithm based on this centralized algorithm that uses iteratively refined quantization to limit message size.
\item We show that our distributed algorithm is equivalent to the centralized algorithm, where the errors introduced by quantization can be interpreted as inexact gradient computations. We further design quantizers that guarantees a linear convergence rate to the optimal solution.
\item We demonstrate the performance of the proposed algorithm in numerical simulations.
\end{itemize} 

The remainder of this paper is organized as follows.  
In Section~\ref{prelim.sec}, we present the centralized inexact proximal gradient algorithm and give background on quantization. 
In Section~\ref{problem.sec}, we give the system model and problem formulation.
Section~\ref{algorithm.sec} details our distributed algorithm.
Section~\ref{analysis.sec} provides theoretical analysis of our proposed algorithm.
Section~\ref{results.sec} presents our simulation results, and we conclude in Section~\ref{conclusion.sec}.

\section{Preliminaries}~\label{prelim.sec}

\subsection{Inexact Semi-Stochastic Proximal Gradient Algorithm}~\label{SSPG.sec}
We consider an optimization problem over the form: 
\begin{equation} \label{centralopt.eq}
 \underset{x \in \mR^P}{\text{minimize}}~~G(x) = F(x) + R(x),  
\end{equation}
where $F(x) = \frac{1}{N} \sum^N_{i=1} f_i(x)$, and the following assumptions are satisfied. 
\begin{assumption} \label{fi.assum}
Each $f_i(x)$ is differentiable, and its gradient $\nabla f_i(x)$ is Lipschitz continuous with constant $L_i$, i.e., for all $x,y \in \mR^P$,
\begin{equation} \label{lipschitz.eq}
\| \nabla f_i (x) - \nabla f_i(y) \| \leq L_i \| x - y \|.
\end{equation}
\end{assumption}

\begin{assumption} \label{R.assum}
The function $R(x)$ is lower semicontinuous, convex, and its effective domain, $\text{dom}(R) := \{x \in \mR^P ~|~ R(x) < +\infty\}$,  is closed.
\end{assumption}

\begin{assumption} \label{strongconvex.assum}
The function $G(x)$ is strongly convex with parameter $\mu > 0$, i.e., for all $x, y \in \text{dom}(R)$
and for all $\xi \in \partial G(x)$,
\begin{equation} \label{strongcvx.eq}
G(x) - G(y) - \textstyle \frac{1}{2} \mu \|x - y \|^2 \geq \xi \tp (x - y),
\end{equation}
where $\partial G(x)$ is the subdifferential of $G$ at $x$. This strong convexity may come from either $F(x)$ or $R(x)$ (or both).
\end{assumption}


Problem (\ref{centralopt.eq}) can be solved using a stochastic proximal gradient algorithm~\cite{DS09} where, in each iteration,  
a single $\nabla f_{\ell}$ is computed for a randomly chosen $\ell \in \{1, \ldots, N\}$, and the iterate is updated accordingly as,
\[
\xc{t+1} = \prox{\xc{t} - \eta^{(t)} \nabla f_{\ell}(\xc{t})}.
\]
Here, $\prox{\cdot}$ is the proximal operator 
\[
\prox{v} = \argmin_{y \in \mR^p} \frac{1}{2} \| y- v \|^2  + \eta R(y).
\]

While stochastic methods offer the benefit of reduced per-iteration computation over standard gradient methods,
the iterates may have high variance. These methods typically use a decreasing step-size $\eta^{(t)}$ to compensate for this variance, resulting in slow convergence.
Recently, Xiao and Zhang proposed a semi-stochastic proximal gradient algorithm, {Prox-SVRG} that reduces the variance by periodically incorporating a full gradient computation~\cite{xiao2014proximal}.
This modification allows Prox-SVRG to use a constant step size, and thus, {Prox-SVRG} achieves a linear convergence rate.

\begin{algorithm}[t]
\caption{Inexact Prox-SVRG.} \label{centStochastic.alg}
\begin{algorithmic} \footnotesize
\State \textbf{Initialize:} $\xta{s} = 0$
\For{$s = 0, 1, 2, \ldots$}
\State $\gc{s} =  \nabla F(\xta{s})$
\State $\xc{s_0} = \xta{s}$
\For{$t = 0, 1, 2, \ldots, T-1$}
\State Choose $\ell$ uniformly at random from $\{1, \ldots, N \}$.
\State $\vc{s_t} = \nabla f_{\ell}(\xc{s_{t}}) - \nabla f_{\ell}(\xta{s}) + \gc{s} + \ec{s_t}$
\State $\xc{s_{t+1}} = \prox{\xt{s_{t}} - \eta \vc{s_t}}$
\EndFor
\State $\xta{s+1} =  \frac{1}{T} \sum_{t=1}^T\xta{s_t}$ 
\EndFor
\end{algorithmic}
\end{algorithm}

We extend {Prox-SVRG} to include a zero-mean error in the gradient computation.  Our resulting algorithm, Inexact Prox-SVRG, 
is given in Algorithm~\ref{centStochastic.alg}.
The algorithm consists of an outer loop where the full gradient is computed and an inner loop where the iterate is updated based on both the stochastic and full gradients.

The following theorem states the convergence behavior of Algorithm~\ref{centStochastic.alg}. 
\begin{theorem}\label{centralized.thm}
Let $\{\xt{s}\}_{s \geq 0}$ be the sequence generated by Algorithm~\ref{centStochastic.alg}, with $0 < \eta < \frac{1}{4 \Lmax}$,
where $\Lmax = \max_i L_i$. 
Assume that the functions $R$, $G$, and $f_i$, $i=1, \ldots, N$, satisfy Assumptions~\ref{fi.assum}, \ref{R.assum}, and \ref{strongconvex.assum},
and that the errors $\ec{s_t}$ are zero-mean and uncorrelated with the iterates $\xc{s_t}$ and their gradients $\nabla f_i(\xc{s_t})$.
 Let ${\xopt = \arg \min_x G(x)}$, and let $T$ be such that,
\[
\alpha = \frac{1}{\mu \eta(1 - 4 \Lmax \eta)T} + \frac{4\Lmax \eta(T+1)}{(1 - 4 \Lmax \eta)T} < 1.
\]
Then,
\begin{align*}
&\expec{G(\xta{s}) - G(\xopt)} \\
&~~~~~ \leq \alpha^s \left(G(\xt{0}) - G(x^\star) + \beta \sum^{s}_{i=1} \alpha^{-i} \Gamma^{(i)} \right)
\end{align*}
where $\beta = \frac{\eta}{T(1-4 \Lmax \eta)}$ and $\Gamma^{(i)} = \sum^{T-1}_{t=0} \ex{\norm{\ec{i_t}}^2}$.
\end{theorem}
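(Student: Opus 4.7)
The plan is to follow Xiao and Zhang's analysis of Prox-SVRG~\cite{xiao2014proximal}, modified to carry the zero-mean error $\ec{s_t}$ through each step. Because the error is uncorrelated with the iterate $\xc{s_t}$ and with the gradients $\del f_i(\xc{s_t})$, I expect it to contribute only an additive $\expec{\norm{\ec{s_t}}^2}$ to the variance of $\vc{s_t}$, which will then show up in the stated bound as the additive $\Gamma^{(i)}$ terms.

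First I would verify that $\vc{s_t}$ is an unbiased estimator of $\del F(\xc{s_t})$. Conditioning on $\xc{s_t}$ and $\xta{s}$, the uniform draw of $\ell$ makes $\expec{\del f_\ell(\xc{s_t}) - \del f_\ell(\xta{s})} = \del F(\xc{s_t}) - \del F(\xta{s})$, so the term $\gc{s} = \del F(\xta{s})$ cancels the second piece while $\expec{\ec{s_t}} = 0$ kills the error. Next I would bound $\expec{\norm{\vc{s_t} - \del F(\xc{s_t})}^2}$. Splitting $\vc{s_t} - \del F(\xc{s_t})$ as the sum of the standard Prox-SVRG noise and $\ec{s_t}$, the uncorrelation hypothesis eliminates the cross term, and the Xiao-Zhang variance lemma handles the first piece, yielding
\[
\expec{\norm{\vc{s_t} - \del F(\xc{s_t})}^2} \leq 4\Lmax \bigl( \expec{G(\xc{s_t}) - G(\xopt)} + \expec{G(\xta{s}) - G(\xopt)} \bigr) + \expec{\norm{\ec{s_t}}^2}.
\]

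I would then invoke the one-step proximal descent inequality (Lemma~3 of~\cite{xiao2014proximal}) applied to the update $\xc{s_{t+1}} = \prox{\xc{s_t} - \eta \vc{s_t}}$ with reference point $\xopt$, take conditional expectation, substitute the variance bound above, and telescope over the inner loop $t = 0, \ldots, T-1$. Dropping the nonnegative $\expec{\norm{\xc{s_T} - \xopt}^2}$ term, using strong convexity (Assumption~\ref{strongconvex.assum}) to bound $\norm{\xta{s} - \xopt}^2 \leq (2/\mu)(G(\xta{s}) - G(\xopt))$, and applying Jensen's inequality via the averaging $\xta{s+1} = \frac{1}{T}\sum_t \xc{s_t}$ to get $T \expec{G(\xta{s+1}) - G(\xopt)} \leq \sum_t \expec{G(\xc{s_t}) - G(\xopt)}$ produces the one-epoch contraction
\[
\expec{G(\xta{s+1}) - G(\xopt)} \leq \alpha \expec{G(\xta{s}) - G(\xopt)} + \beta \, \Gamma^{(s+1)},
\]
with $\alpha$ and $\beta$ exactly as in the statement. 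Unrolling this linear recursion in $s$ yields the claimed closed-form bound.

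The main obstacle is the variance decomposition: the uncorrelation hypothesis has to kill the cross term $\expec{\ip{\del f_\ell(\xc{s_t}) - \del f_\ell(\xta{s}) + \gc{s} - \del F(\xc{s_t})}{\ec{s_t}}}$, which requires $\ec{s_t}$ to be uncorrelated not only with the iterates and $\del f_i(\xc{s_t})$ (as assumed) but also with the random index $\ell$; I would need to verify that this is implicit in the stated hypothesis. Once that decoupling is justified, the remainder of the argument is a direct algebraic extension of the Xiao-Zhang proof, with the extra $\expec{\norm{\ec{s_t}}^2}$ summand carried through each inequality to produce the $\Gamma^{(i)}$ term.
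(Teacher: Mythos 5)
Your proposal is correct and follows essentially the same route as the paper's proof: both apply the Xiao--Zhang one-step proximal inequality with $\Delta^{(s_t)} = \wc{s_{t-1}} + \ec{s_{t-1}} - \del F(\xc{s_{t-1}})$, use zero-mean/uncorrelation to kill the cross term and to decompose $\ex\|\Delta^{(s_t)}\|^2$ into the standard Prox-SVRG variance plus $\ex\|\ec{s_{t-1}}\|^2$, then telescope over the inner loop and unroll the resulting recursion $\ex[G(\xt{s+1})-G(\xopt)] \leq \alpha\,\ex[G(\xt{s})-G(\xopt)] + \beta\,\Gamma$. The subtlety you flag about the error needing to be decoupled from the random index $\ell$ is exactly the point the paper handles by asserting independence of $\ell$ and $\ec{s_{t-1}}$ from $\overline{x}^{(s_t)}$, $\xopt$, $\wc{s_{t-1}}$, and $\del F(\xc{s_{t-1}})$.
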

The proof is given in the appendix.

From this theorem, we can derive conditions for the algorithm to converge to the optimal $\xopt$.
Let the sequence $\{\Gamma^{(s)}\}_{s \geq 0}$ decrease linearly at a rate  $\kappa$. 
Then
\begin{enumerate}
\item If $\kappa < \alpha$, then $\expec{G(\xta{s}) - G(\xopt)}$ converges linearly with a rate of $\alpha$.
\item If  $\alpha < \kappa < 1$, then $\expec{G(\xta{s}) - G(\xopt)}$ converges linearly with a rate of $\kappa$.
\item If  $\kappa = \alpha$, then $\expec{G(\xta{s}) - G(\xopt)}$ converges linearly with a rate in $O(s \alpha^s)$.
\end{enumerate}

\subsection{Subtractively Dithered Quantization}

We employ a subtractively dithered quantizer to quantize values before transmission. 
We use a substractively dithered quantizer rather than non-subtractively dithered quantizer because the quantization 
error of the subtractively dithered quantizer is not correlated with its input.
We briefly summarize the quantizer and its key properties below.

Let $z$ be real number to be quantized into $n$ bits.   
The quantizer is parameterized by an interval size $U$ and a midpoint value $\overline{z} \in \mR$.
Thus the quantization interval is ${[\overline{z} - U/2, \overline{z}  + U/2]}$, and the quantization step-size is  ${\Delta = \frac{U}{2^n - 1}}$.
We first define the uniform quantizer,
\begin{equation} \label{uniquant.eq}
q(z) \eqdef \zbar + \text{sgn}(z - \zbar) \cdot \Delta \cdot \left \lfloor{\frac{\abs{z -\zbar}}{\Delta} + \frac{1}{2}} \right \rfloor.
\end{equation}
In subtractively dithered quantization, a dither $\nu$ is added to $z$, the resulting value is quantized using a uniform quantizer,
and then transmitted.  The recipient then subtracts $\nu$ from this value.
The subtractively dithered quantized value of $z$, denoted $\hat{z}$, is thus
\begin{equation} \label{quantizer.eq}
\hat{z} = Q(z) \eqdef q(z + \nu) - \nu.
\end{equation}
Note that this quantizer requires both the sender and recipient to use the same value for $\nu$, for example, by using the same 
pseudorandom number generator.

The following theorem describes the statistical properties of the quantization error.
\begin{theorem}[See \cite{lipshitz1992quantization}] \label{quantizer.thm}
Let $z \in [\overline{z} - U/2, \overline{z}  + U/2]$ and $\hat{z} = Q(z)$, for $Q(\cdot)$ in (\ref{quantizer.eq}).
Further, let $\nu$ is a  real number drawn uniformly at random from the interval $(-\Delta/2, \Delta/2)$.
The quantization error $\qerr{z} \eqdef z - \hat{z}$ satisfies the following:
\begin{enumerate}
\item $\expec{\qerr{z}} = \expec{\nu} = 0$.
\item $\expec{\qerr{z}^2} = \expec{\nu^2} = \frac{\Delta^2}{12}$ 
\item $\expec{z \qerr{z}} = \expec{z} \expec{\qerr{z}} = 0$ 
\item For  $z_1$ and $z_2$ in the interval ${[\overline{z} - U/2, \overline{z}  + U/2]}$, 
$\expec{\qerr{z_1} \qerr{z_2}} = \expec{\qerr{z_1}}\expec{\qerr{z_2}} = 0.$ \\
\end{enumerate}
\end{theorem}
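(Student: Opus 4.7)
The plan is to reduce all four claims to a single observation: the error $\qerr{z}$ is uniformly distributed on $(-\Delta/2, \Delta/2]$ and its distribution does not depend on $z$ (so it is independent of $z$, and of any error coming from a quantization call that uses an independently drawn dither).

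First I would rewrite the error in a more convenient form. Since $\hat{z} = q(z+\nu) - \nu$, we have
\[
\qerr{z} \;=\; z - \hat{z} \;=\; (z+\nu) - q(z+\nu),
\]
so $\qerr{z}$ is exactly the undithered uniform quantization error applied to the shifted value $z+\nu$. Define $e(w) \eqdef w - q(w)$. From (\ref{uniquant.eq}), $q(w)$ is the nearest point to $w$ in the lattice $\zbar + \Delta \mathbb{Z}$, so $e$ is a sawtooth function, periodic with period $\Delta$ and taking values in $(-\Delta/2, \Delta/2]$. On any subinterval of length $\Delta$ in the quantizer range, $e$ is piecewise affine with slope $1$ and sweeps through $(-\Delta/2, \Delta/2]$ exactly once; equivalently, $e$ is a measure-preserving map from such an interval to $(-\Delta/2, \Delta/2]$.

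The key step then follows immediately: for any fixed $z$, the random variable $z+\nu$ is uniform on the length-$\Delta$ interval $(z-\Delta/2, z+\Delta/2)$, so $\qerr{z} = e(z+\nu)$ is uniform on $(-\Delta/2, \Delta/2]$ with a law that does not depend on $z$. Properties (1) and (2) then follow from elementary moment computations for a uniform distribution on $(-\Delta/2, \Delta/2]$, matching $\expec{\nu}=0$ and $\expec{\nu^2}=\Delta^2/12$. Property (3) is immediate: since the conditional law of $\qerr{z}$ given $z$ is the same uniform distribution irrespective of $z$, $\qerr{z}$ is independent of $z$, so $\ex[z\qerr{z}] = \ex[z]\,\ex[\qerr{z}] = 0$. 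For property (4), the two quantization calls use independently drawn dithers $\nu_1,\nu_2$, so $\qerr{z_1} = e(z_1+\nu_1)$ and $\qerr{z_2} = e(z_2+\nu_2)$ are functions of independent dithers and are themselves independent zero-mean random variables, yielding $\expec{\qerr{z_1}\qerr{z_2}}=0$.

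The main delicate point will be the measure-preserving claim for $e$ on intervals of length $\Delta$ that straddle a cell boundary $\zbar + (k+\tfrac{1}{2})\Delta$, where $e$ has a jump; one must verify that the two affine branches cover $(-\Delta/2, \Delta/2]$ without overlap. Since the boundary itself has Lebesgue measure zero under the uniform law of $\nu$, the identification of the pushforward distribution of $e(z+\nu)$ with the uniform distribution on $(-\Delta/2, \Delta/2]$ is unaffected, and all four moment identities follow.
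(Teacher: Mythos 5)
Your proposal is correct, but there is nothing in the paper to compare it against: the paper does not prove this theorem, it imports it by citation to the dithered-quantization literature, so the statement carries no in-paper proof at all. Your argument is the standard one for subtractively dithered quantizers: the identity $\qerr{z} = (z+\nu) - q(z+\nu)$ reduces everything to the sawtooth error of the undithered quantizer, and since $z+\nu$ is uniform on a length-$\Delta$ interval and the sawtooth is measure-preserving from any such interval onto $(-\Delta/2,\Delta/2]$, the error is uniform with a law not depending on $z$; properties (1)--(3) follow, and you correctly flag the cell-boundary issue as a measure-zero technicality. Two small points worth making explicit if you write this up. First, property (3) with $\ex[z]$ appearing only makes sense when $z$ is random, and your independence argument then needs $\nu$ to be drawn independently of $z$; this is the intended reading but is not stated in the theorem. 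Second, property (4) requires the two quantization calls to use \emph{independently} drawn dithers $\nu_1,\nu_2$ (the theorem speaks of a single $\nu$), and if $z_1,z_2$ are themselves random and dependent, the clean way to finish is to condition on $(z_1,z_2)$: the conditional joint law of the two errors is a fixed product of uniforms, so they are unconditionally independent and zero-mean, giving $\expec{\qerr{z_1}\qerr{z_2}}=0$. With those caveats your proof is complete and matches what the cited reference establishes.
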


With some abuse of notation, we  also write $Q(v)$ where $v$ is a vector.
In this case, the quantization operator is applied to each component of $v$ independently, using
a vector-valued midpoint and the same scalar-valued interval bounds.

\section{Problem Formulation} \label{problem.sec}

We consider a similar system model to that in~\cite{PY2015a}.
The network is a  connected graph of $N$ nodes where inter-node communication is limited to the local neighborhood of each node. 
The neighbor set $\cN{i}$ consists of node $i$'s neighbors and itself.
The neighborhoods exist corresponding to the fixed undirected graph $G = (\mathcal{V}, \mathcal{E})$. 
We denote $D$ as the maximum degree of the graph $G$. 

Each node $i$ has a state vector $x_{i}$ with dimension $m_i$.  The state of the system is $x = [ x_1\tp x_2 \tp \ldots x_N \tp]\tp$.
We let $x_{\cN{i}}$ be the vector consisting of the concatenation of states of all nodes in $\cN{i}$.
For ease of exposition, we define the selecting matrices $\A_i$, $i = 1, \ldots, N$, where  $x_{\cN{i}} = \A_i x$ and 
the matrices $\B_{ij}$, $i,j= 1, \ldots, N$ where $x_j = \B_{ij} x_{\cN{i}}$.  These matrices each have  $\ell_2$-norm of 1.

Every node $i$ has a local objective function over the states in $\cN{i}$.
The distributed optimization problem is thus,
\begin{equation} \label{distprob.eq}
\underset{x \in \mR^P}{\text{minimize}}~~G(x) = F(x) + R(x),
\end{equation}
where $F(x) = \frac{1}{N} \sum_{i=1}^N f_i(x_{\cN{i}})$.
We assume that Assumptions~\ref{fi.assum} and \ref{strongconvex.assum} are satisfied.
Further, we require the following assumptions hold.
\begin{assumption} \label{fex.assum}
For all $i$, $\nabla f_i(x_{\cN{i}})$ is linear or constant.  This implies that, for a zero-mean random variable $\nu$, 
$\expec{\nabla f_i(x_{\cN{i}} + \nu)} = \nabla f_i(x_{\cN{i}})$.
\end{assumption}
\begin{assumption} \label{je.assum}
The proximal operation $\text{prox}_R(x)$ can be performed by each node locally, i.e.,
\[
\text{prox}_R(x) = [ \text{prox}_R(x_1)\tp~ \text{prox}_R(x_2)\tp \ldots \text{prox}_R(x_N)\tp]\tp.
\]
\end{assumption}
We note that Assumption~\ref{je.assum} holds for standard regularization functions used in LASSO ($\| x \|_1$), group LASSO where each $x_i$ its own group, 
and Elastic Net regularization ($\lambda_1 \|x \|_1  + \frac{\lambda_2}{2} \| x \|^2_2$).

In the next section, we present our distributed implementation of Prox-SVRG to solve Problem (\ref{distprob.eq}).

%
%

\begin{algorithm}
\caption{Inexact Semi-stochastic Gradient Descent as executed by node $i$} \label{SPGdist.alg}
\begin{algorithmic}[1]  \footnotesize
\State \textbf{Parameters:} inner loop size $T$, step size $\eta$
\State \textbf{Initialize:} $\xt{0}_{i} = 0$, $\xth{-1}{i}=0$, $\qgradi{-1} = 0$
\For{$s=0,1, \ldots$}

\State Update quantizer parameters: 
\Statex $~~~~\Uas{s} = C_a  \kappa^{(s+1)/2}$,  $\amid = \xth{s-1}{i}$, \\
\Statex $~~~~\Ubs{s} = C_b \kappa^{(s+1)/2}$,  $\bmid= \qgradi{s-1}$
\State Quantize local variable and send to all $j \in \cN{i}$:\
\Statex  $~~~~~~~~~~~~~~~\xth{s}{i}=  \Qai(\xt{s}_i) = \xt{s}_i + \ani{s}$
\State Compute: $\gradi{s} = \del f_i(\xth{s}{\N{i}})$
\State Quantize gradient and send to all $j \in \cN{i}$:
\Statex $~~~~~~~~~~~~~~~\qgradi{s} = \Qbi(\gradi{s}) = \gradi{s} + \bni{s}$
\State Compute: $\htil{s}_i = \frac{1}{N} \sum_{j \in \N{i}} \B_{ij} \qgrad{j}{s}$ 
\State Compute: $v^{(s)}_{i j} = - \B_{ij} \qgrad{j}{s} + \htil{s}_i$ for all $j \in \cN{i}$
\State Update quantizer parameters: 
\Statex $~~~~~~\Ucs{s} = C_c \kappa^{(s+1)/2}$,   $\cmid= \xth{s}{i}$, \\
\Statex $~~~~~~\Uds{s} =C_d \kappa^{(s+1)/2}$,   $\dmid = \qgradi{s}$
\State $\xs{0}_i = \xt{s}_i$
    \For{$t=0,1,\ldots,T-1$}
    	\State Randomly pick $\lt \in \{1,2,3, \ldots, N\}$
	\If{$i \in \N{\lt}$}
            	\State Quantize local variable and send to $\lt$: 
            	\Statex $~~~~~~~~~~~~~~~\xsh{t}_i = \Qci(\xs{t}_i) = \xs{t}_i + \cni{s_t}$
            	\If{$ i = \lt $}
		\State Compute: $\gradi{s_t} = \del f_i(\xsh{t}_{\N{i}})$
            	\State Quantize gradient and send to all $j \in \cN{i}$:
		\Statex $~~~~~~~~~~~~~~~\qgradi{s_t} = \Qdi(\gradi{s_t}) =  \gradi{s_t} + \dni{s_t}$
            	\EndIf
		\State Update local variable:  \\
		\Statex $~~~~~~~~~~~~~\xs{t+1}_i = \prox {\xs{t}_i - \eta(\B_{i \ell} \qgrad{\ell}{s_t}  + v_{i \ell}^{(s)})}$
	\Else
		\State Update local variable:  \\
		\Statex $~~~~~~~~~~~~~\xs{t+1}_i = \prox{\xs{t} - \eta \htil{s}_i}$
	\EndIf
	
    \EndFor
\State $\xt{s+1}_i =  \frac{1}{T} \sum_{t=1}^T \xs{t}$
\EndFor
\end{algorithmic}
\end{algorithm}

\section{Algorithm} \label{algorithm.sec}
Our distributed algorithm is given in Algorithm~\ref{SPGdist.alg}.  In each outer iteration $s$, node $i$ quantizes its iterate $\xt{s}_{i}$ and the gradient $\gradi{s}$ and sends it to all of its neighbors.  These values are quantized using two subtractively dithered quantizers, $\Qai$ and $\Qbi$,
whereby the sender (node $i$) sends an $n$ bit representation and the recipient reconstructs the value from this representation and subtracts the dither.
The midpoints for $\Qai$ and $\Qbi$ are set to be the quantized values from the previous iteration.  Thus, the recipients already know these midpoints.
The quantized values (after the dither is subtracted) are denoted by $\xth{s}{i}$ and $\qgradi{s}$, and the quantization errors are $\ani{s}$ and $\bni{s}$, respectively. 

For every iteration $s$ of the outer loop of the algorithm, there is an inner loop of $T$ iterations.  
In each inner iteration, a single node $\ell$, chosen at random, computes its gradient.  To do this, node $\ell$ and its neighbors exchange their states $\xs{t}_i$ 
and gradients $\gradi{s_t}$. These values are quantized using two subtractively dithered quantizers, $\Qci$ and $\Qdi$.
The midpoints for these quantizers are $\xth{s}{i}$ and $\qgrad{i}{s}$.  Each node sends these values to their neighbors before the inner loop, so 
all nodes are aware of the midpoints.  The quantized values (after the dither is subtracted) are denoted by  $\xsh{t}$ and $\qgradi{s_t}$,
and their quantization errors are $\cni{s_t}$ and $\dni{s_t}$,  respectively. The quantization interval bounds $\Uas{s}$, $\Ubs{s}$, $\Ucs{s}$, and $\Uds{s}$,
are initialized to $C_a$, $C_b$, $C_c$, and $C_d$, respectively, and each iteration, the bounds are multiplied by $\kappa^{1/2}$.
Thus the quantizers are \emph{refined} in each iteration.

The quantizers limit the length of a single variable transmission to $n$ bits. 
In the outer loop of the algorithm, each node $i$ sends its local variable, consisting of $m_i$ quantized components, to every neighbor. It also sends its gradient, consisting of $|\cN{i}| m_i$ quantized components to every neighbor.
Thus the number of bits exchanged by all nodes is $n \sum_{i=1}^N |\cN{i}| m_i + |\cN{i}|^2 m_i$ bits.
In each inner iteration, only nodes $j \in \cN{\ell}$ exchange messages. Each node $j$ quantizes $m_j$
state variables and sends them to  node $\ell$. This yields a transmission of $n\sum_{j \in \cN{\ell}} m_j$ bits in total. 
In turn, node $\ell$  quantizes its gradient and sends it to all of its neighbors, which is $n|\cN{\ell}|^2m_{\ell}$ total bits. Thus, in each inner iteration 
$n(|\cN{\ell}|^2m_{\ell} + \sum_{j \in \cN{\ell}} m_j)$ bits are transmitted.
The total number of bits transmitted in a single outer iteration is therefore, 
\[
n\left(\sum_{i=1}^N \left(|\cN{i}| m_i (1 + |\cN{i}|)\right) + \sum_{t=0}^{T-1}\left(|\cN{\ell}|^2m_{\ell} + \sum_{j \in \cN{\ell}} m_j\right)\right).
\]

 Let $D = \max_i |\cN{i}|$ and  $\overline{m} = \max_i m_i$. An upper bound on the number bits transmitted by the algorithm in each outer iteration is $n\overline{m}(N+T)(D+D^2)$.

%

\section{Algorithm Analysis}\label{analysis.sec}
We now present our analysis of Algorithm~\ref{SPGdist.alg}.
First we show that the algorithm is equivalent to Algorithm~\ref{centStochastic.alg}, where the quantization errors are
encapsulated in the error term $\ec{s_t}$.  We also give an explicit expression for this error term.
\begin{lemma}
Algorithm~\ref{SPGdist.alg} is equivalent to the Inexact Prox-SVG method in Algorithm~\ref{centStochastic.alg}, with 
\begin{align*} \label{qe.eq}
&\ec{s_t} = \cA{\ell}\tp \left( \nabla f_{\ell} (\xsh{t}_{\N{\ell}})- \nabla f_{\ell} (\xc{s_t}_{\N{\ell}}) \right) + \cA{\ell}\tp d_{\ell}^{(s_{t})}  \\
&~~-\cA{\ell}\tp \left( \nabla f_{\ell} (\xth{s}{\N{\ell}})- \nabla f_{\ell} (\xta{s}_{\N{\ell}}) \right) - \cA{\ell}\tp b_{\ell}^{(s)} \\
&~~+ \textstyle \frac{1}{N} \textstyle \sum_{i=1}^N \cA{i}\tp  \left( \nabla f_{i} (\xth{s}{\N{i}})- \nabla f_{i} (\xta{s}_{\N{i}}) \right) + \frac{1}{N}\sum_{i=1}^N  \cA{i}\tp \bni{s}.
\end{align*}
Further, $\ex{\| \ec{s_t} \|^2 }$ is upper-bounded by,
\begin{align*}
\ex \| \ec{s_t} \|^2 &\leq 2 \Lmax^2 \sum_{j \in \cN{\ell}} \ex \| c_j^{({s_t})} \|^2 + 2 \Lmax^2  \sum_{j \in \cN{\ell}} \ex \| a_j^{(s)} \|^2 \\
& + \ex \| d_{\ell}^{(s_{t})} \|^2 +2 \ex \|b_{\ell}^{(s)} \|^2 + \frac{2}{N^2} \sum_{i=1}^N \ex \| \bni{s} \|^2.
\end{align*}
\end{lemma}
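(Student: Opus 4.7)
My plan is to prove the lemma in two independent steps: (i) algebraically rewrite Algorithm~\ref{SPGdist.alg} so that its global update matches the template of Algorithm~\ref{centStochastic.alg}, and (ii) bound the second moment of the residual error term.

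For step (i), the starting point is Assumption~\ref{je.assum}: because $\text{prox}_{\eta R}$ decouples coordinate-wise, the per-node updates in both branches ($i \in \cN{\ell}$ and $i \notin \cN{\ell}$) can be stitched into a single global update $\xc{s_{t+1}} = \text{prox}_{\eta R}(\xc{s_t} - \eta v^{(s_t)})$. Noting that $\B_{i\ell}$ vanishes when $i \notin \cN{\ell}$ and otherwise selects the $i$th block of $\cA{\ell}\tp$, and that the per-node $\htil{s}_i$'s assemble into the global vector $\tfrac{1}{N}\sum_{j=1}^N \cA{j}\tp \qgrad{j}{s}$, the global direction is
\[
v^{(s_t)} = \cA{\ell}\tp \qgrad{\ell}{s_t} - \cA{\ell}\tp \qgrad{\ell}{s} + \tfrac{1}{N}\sum_{i=1}^N \cA{i}\tp \qgrad{i}{s}.
\]
Substituting $\qgrad{\ell}{s_t} = \nabla f_\ell(\xsh{t}_{\N{\ell}}) + d_\ell^{(s_t)}$ and the analogous identities for $\qgrad{\ell}{s}$ and each $\qgrad{i}{s}$, then inserting $\pm \cA{\ell}\tp \nabla f_\ell(\xc{s_t}_{\N{\ell}})$, $\pm \cA{\ell}\tp \nabla f_\ell(\xta{s}_{\N{\ell}})$, and $\pm \cA{i}\tp \nabla f_i(\xta{s}_{\N{i}})$ inside the appropriate slots, collapses the noise-free pieces to $\nabla f_\ell(\xc{s_t}) - \nabla f_\ell(\xta{s}) + \gc{s}$ and identifies the leftover as the claimed $\ec{s_t}$. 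Assumption~\ref{fex.assum} together with Theorem~\ref{quantizer.thm}(1) then yields $\ex[\ec{s_t}\mid \text{history}] = 0$ and uncorrelatedness with $\xc{s_t}$ and $\nabla f_i(\xc{s_t})$, which are precisely the hypotheses needed to invoke Theorem~\ref{centralized.thm}.

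For step (ii), I would label the six summands of $\ec{s_t}$ as $A,B,C,D,E,F$ (gradient-differences $A,C,E$ and raw quantization errors $B,D,F$) and apply $\|u+v\|^2 \le 2\|u\|^2 + 2\|v\|^2$ to carefully chosen pairs. Each gradient-difference norm is bounded by $\Lmax^2$ times a sum of squared quantization errors via Lipschitz continuity (Assumption~\ref{fi.assum}) and the isometry $\|\cA{i}\tp y\|=\|y\|$. For the pure quantization-error contributions, the zero cross-covariance property from Theorem~\ref{quantizer.thm}(3)--(4), together with independence of dithers across distinct quantizers, eliminates off-diagonal terms in $\tfrac{1}{N}\sum_i \cA{i}\tp \bni{s}$, leaving only the diagonal $\tfrac{1}{N^2}\sum_i \ex\|\bni{s}\|^2$ contribution.

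The main obstacle is the coefficient bookkeeping when terms that share a random summand are combined. Specifically, $-\cA{\ell}\tp b_\ell^{(s)}$ and $\tfrac{1}{N}\sum_i \cA{i}\tp \bni{s}$ both contain $b_\ell^{(s)}$, and $-\cA{\ell}\tp[\nabla f_\ell(\xth{s}{\N{\ell}})-\nabla f_\ell(\xta{s}_{\N{\ell}})]$ overlaps with its own average over $i$; if one blindly squared each of the six terms independently, the shared components would be double-counted and the dependence on $N$ would be incorrect. The fix is to group each stochastic-index pair with its averaged counterpart \emph{before} squaring, which (using $(1-\tfrac{1}{N})^2 \le 1$) reproduces the factor $2\Lmax^2$ on $\sum_{j\in\cN{\ell}}\ex\|a_j^{(s)}\|^2$ and the factors $2$ and $2/N^2$ on $\ex\|b_\ell^{(s)}\|^2$ and $\sum_i\ex\|\bni{s}\|^2$ in the stated bound, while the $c_j^{(s_t)}$ and $d_\ell^{(s_t)}$ terms pick up their factors from the initial pairing of the inner-iteration contributions.
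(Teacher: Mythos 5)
Your step (i) --- assembling the global update via Assumption~\ref{je.assum}, substituting the quantized gradients, inserting the exact gradients, and invoking Assumption~\ref{fex.assum} together with Theorem~\ref{quantizer.thm} for zero mean and uncorrelatedness --- is the same argument the paper gives. The gap is in step (ii), specifically in how you handle the pair
\[
-\cA{\ell}\tp \bigl( \nabla f_{\ell} (\xth{s}{\N{\ell}})- \nabla f_{\ell} (\xta{s}_{\N{\ell}}) \bigr) + \tfrac{1}{N} \textstyle\sum_{i=1}^N \cA{i}\tp \bigl( \nabla f_{i} (\xth{s}{\N{i}})- \nabla f_{i} (\xta{s}_{\N{i}}) \bigr).
\]
Writing $X_i = \cA{i}\tp ( \nabla f_{i} (\xth{s}{\N{i}})- \nabla f_{i} (\xta{s}_{\N{i}}) )$, your plan is to regroup this as $-(1-\tfrac{1}{N})X_\ell + \tfrac{1}{N}\sum_{i\neq\ell}X_i$ and use $(1-\tfrac{1}{N})^2\le 1$. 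But the residual $\tfrac{1}{N}\sum_{i\neq\ell}X_i$ does not go away: it contains quantization errors $a_j^{(s)}$ for nodes $j\notin\cN{\ell}$, which cannot be folded into the claimed bound $2\Lmax^2\sum_{j\in\cN{\ell}}\ex\|a_j^{(s)}\|^2$. The paper's argument here is the one step that genuinely needs an idea: since $\ell$ is uniform on $\{1,\dots,N\}$, the averaged sum is exactly $\expec{X_\ell}$, so the whole expression equals $-(X_\ell-\expec{X_\ell})$, and the centered second moment satisfies $\ex\|X_\ell-\expec{X_\ell}\|^2\le\ex\|X_\ell\|^2\le\Lmax^2\sum_{j\in\cN{\ell}}\ex\|a_j^{(s)}\|^2$. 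The full-gradient correction term thus vanishes at no cost in constants. Without this variance identity, even the correct crude route ($\|u+v\|^2\le 2\|u\|^2+2\|v\|^2$ plus Jensen on the average) gives $4\Lmax^2$ rather than $2\Lmax^2$, and in a form involving all nodes rather than only $\cN{\ell}$.

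The rest of your step (ii) is essentially sound, but note that the ``double counting'' you flag for the dither terms is not actually an obstacle: $\|u+v\|^2\le 2\|u\|^2+2\|v\|^2$ holds regardless of correlation, so applying it once with $u=-\cA{\ell}\tp b_{\ell}^{(s)}$ and $v=\tfrac{1}{N}\sum_{i}\cA{i}\tp\bni{s}$ and then using pairwise uncorrelatedness of the $\bni{s}$ already yields the stated $2\ex\|b_{\ell}^{(s)}\|^2+\tfrac{2}{N^2}\sum_{i}\ex\|\bni{s}\|^2$; no regrouping or $(1-\tfrac{1}{N})$ bookkeeping is needed there. You should also make explicit that the $d_{\ell}^{(s_t)}$ term and the gradient-difference block enter with coefficient $1$ and $2$ respectively because the initial split of $\ex\|\ec{s_t}\|^2$ into the gradient-difference part and the pure-dither part is an orthogonal (cross terms vanish) decomposition, not another factor-of-two bound.
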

\begin{proof}
The error $\ec{s_t}$ is:
\begin{align*}
&\ec{s_t} = \textstyle {\cA{\ell}}\tp  \qgrad{\ell}{s_t}  - \cA{\ell}\tp \qgrad{\ell}{s} + \frac{1}{N}\sum_{i=1}^N {\cA{i}}\tp \qgradi{s} \\
&~~~~~~- \left( \textstyle {\cA{\ell}}\tp \nabla f_{\ell} (\xc{s_t}_{\N{\ell}}) - {\cA{\ell}}\tp \nabla f_{\ell} (\xta{s}_{\N{\ell}} ) \right.\\
&~~~~~~~~~~~~~~~~~~~~~~~+\left.  \textstyle \frac{1}{N}\sum_{i=1}^N {\cA{i}}\tp  \nabla f_{i} (\xta{s}_{\N{i}})\right) \\
&= \cA{\ell}\tp \left( \nabla f_{\ell} (\xsh{t}_{\N{\ell}})- \nabla f_{\ell} (\xc{s_t}_{\N{\ell}}) \right) + \cA{\ell}\tp d_{\ell}^{(s_{t})}  \\
& -\cA{\ell}\tp \left( \nabla f_{\ell} (\xth{s}{\N{\ell}})- \nabla f_{\ell} (\xta{s}_{\N{\ell}}) \right) - \cA{\ell}\tp b_{\ell}^{(s)} \\
&+ \textstyle \frac{1}{N} \textstyle \sum_{i=1}^N \cA{i}\tp  \left( \nabla f_{i} (\xth{s}{\N{i}})- \nabla f_{i} (\xta{s}_{\N{i}}) \right) + \frac{1}{N}\sum_{i=1}^N  \cA{i}\tp \bni{s}.
 \end{align*}
We note that all quantization errors are zero-mean.  Further, by Assumption~\ref{je.assum}, $\expec{ \nabla f_i(x + \delta)} = \nabla f_i(x)$,
for a zero-mean random variable $\delta$.  Therefore, $\expec{\ec{s_t}} = 0$.  

We now show that $\ec{s_t}$ is  is uncorrelated with 
  $\xc{s_t}$ and the gradients $\nabla f_{\ell} (\xc{s_t}_{\N{\ell}})$, $\ell =1, \ldots, N$.
 Clearly,  $\xc{s_t}$ and $\nabla f_{\ell} (\xc{s_t}_{\N{\ell}})$ are uncorrelated with the terms of $\ec{s_t}$ 
 containing $d_{\ell}^{(s_{t})}$, $b_{\ell}^{(s)}$, and $\bni{s}$.
In accordance with Assumption~\ref{je.assum}, the gradients $\nabla f_{\ell}$ and $\nabla f_i$ are either linear or constant.
If they are constant, then 
$\nabla f_{\ell} (\xsh{t}_{\N{\ell}})- \nabla f_{\ell} (\xc{s_t}_{\N{\ell}}) = 0$ and  $\nabla f_{i} (\xth{s}{\N{i}})- \nabla f_{i} (\xta{s}_{\N{i}}) = 0$.
Thus, the terms in $\ec{s_t}$ containing these differences are also 0.
If they are linear, e.g., $\nabla f_{\ell}(z) = Hz + h$, for an appropriately sized, matrix $H$ and vector $h$ (possibly 0).
Then,
\begin{align*}
&\nabla f_{\ell} (\xsh{t}_{\N{\ell}})- \nabla f_{\ell} (\xc{s_t}_{\N{\ell}}) \\
&~~~~ = (H (\xc{s_t}_{\N{\ell}} + \cni{s_t}) + h) - (H\xc{s_t} + h)  = H \cni{s_t}.
\end{align*}
 By Theorem~\ref{quantizer.thm}, $\cni{s_t}$ is uncorrelated with $\xc{s_t}$.  
It is clearly also uncorrelated with $\nabla f_{\ell}(\xc{s_t}_{\N{\ell}})$.  Similar arguments can be used to
show that $\xc{s_t}$ and $\nabla f_{\ell}(\xc{s_t}_{\N{\ell}})$ are uncorrelated with the remaining terms in $\ec{s_t}$.

 With respect to $\ex \| \ec{s_t} \|^2$, we have
 \begin{align*}
\ex \| \ec{s_t} \|^2 &= \ex \| \cA{\ell}\tp \left( \nabla f_{\ell} (\xsh{t}_{\N{\ell}})- \nabla f_{\ell} (\xc{s_t}_{\N{\ell}}) \right)  \\
&~~-\cA{\ell}\tp \left( \nabla f_{\ell} (\xth{s}{\N{\ell}})- \nabla f_{\ell} (\xta{s}_{\N{\ell}}) \right)   \\
&~~+ \textstyle \frac{1}{N} \textstyle \sum_{i=1}^N \cA{i}\tp  \left( \nabla f_{i} (\xth{s}{\N{i}})- \nabla f_{i} (\xta{s}_{\N{i}}) \right) \|^2 \\
& + \ex \| \cA{\ell}\tp d_{\ell}^{(s_{t})}  + \textstyle \frac{1}{N}\sum_{i=1}^N  \cA{i}\tp \bni{s} - \cA{\ell}\tp b_{\ell}^{(s)} \|^2.
 \end{align*}
The first term on the right hand side can be bounded using the fact that $\| a + b \|^2 \leq 2 \|a \|^2 + 2 \|b \|^2$, as
\begin{eqnarray*}
&&\leq 2 \ex \| \cA{\ell}\tp \left( \nabla f_{\ell} (\xsh{t}_{\N{\ell}})- \nabla f_{\ell} (\xc{s_t}_{\N{\ell}}) \right) \|^2 \\
&& ~~~~~~+ 2 \ex \|\cA{\ell}\tp \left( \nabla f_{\ell} (\xth{s}{\N{\ell}})- \nabla f_{\ell} (\xta{s}_{\N{\ell}}) \right)   \\
&&~~~~~~~~~~~~+ \textstyle \frac{1}{N} \textstyle \sum_{i=1}^N \cA{i}\tp  \left( \nabla f_{i} (\xth{s}{\N{i}})- \nabla f_{i} (\xta{s}_{\N{i}}) \right) \|^2. \\
\end{eqnarray*}
We now bound the first term in this expression, 
\begin{align*}
&2 \ex \| \cA{\ell}\tp \left( \nabla f_{\ell} (\xsh{t}_{\N{\ell}})- \nabla f_{\ell} (\xc{s_t}_{\N{\ell}}) \right) \|^2  \\
&~~~~~~\leq 2 \ex( L_i^2 \| \xsh{t}_{\N{\ell}} - \xc{s_t}_{\N{\ell}} \|^2) \leq 2 \Lmax^2 \sum_{j \in \cN{\ell}} \ex \| c_j^{({s_t})} \|^2,
\end{align*}
where the first inequality follows from Assumptions~\ref{fi.assum} and \ref{je.assum}
and the fact that $\| \cA{\ell} \| = 1$.  The second inequality follows from the independence of quantization errors (Theorem~\ref{quantizer.thm}).
Next we bound the second term, 
\begin{align*}
&2 \ex \|\cA{\ell}\tp \left( \nabla f_{\ell} (\xth{s}{\N{\ell}})- \nabla f_{\ell} (\xta{s}_{\N{\ell}}) \right)   \\
&~~+ \textstyle \frac{1}{N} \textstyle \sum_{i=1}^N \cA{i}\tp  \left( \nabla f_{i} (,\xth{s}{\N{i}})- \nabla f_{i} (\xta{s}_{\N{i}}) \right) \|^2\\
&= 2 \ex \|\cA{\ell}\tp \left( \nabla f_{\ell} (\xth{s}{\N{\ell}})- \nabla f_{\ell} (\xta{s}_{\N{\ell}}) \right) \\
&~~~~  - \expec{\cA{\ell}\tp \left( \nabla f_{\ell} (\xth{s}{\N{\ell}})- \nabla f_{\ell} (\xta{s}_{\N{\ell}}) \right)} \|^2 \\
& \leq 2 \ex \|\cA{\ell}\tp \left( \nabla f_{\ell} (\xth{s}{\N{\ell}})- \nabla f_{\ell} (\xta{s}_{\N{\ell}}) \right) \|^2 \\
& \textstyle  \leq 2 \ex( L_i^2 \| (\xth{s}{\N{\ell}} - \xta{s}_{\N{\ell}} \|^2) \\
& \textstyle \leq 2 \Lmax^2  \sum_{j \in \cN{\ell}} \ex \| a_j^{(s)} \|^2,
\end{align*}
where the first inequality uses the fact that for a random variable $\upsilon$, $ \ex \| \upsilon- \ex\upsilon \|^2 = \ex \| \upsilon \|^2 - \| \ex \upsilon \|^2 \leq \ex \| \upsilon \|^2$.
The remaining inequalities follow from Assumptions~\ref{fi.assum} and \ref{je.assum}, the fact that $\| \cA{\ell} \| = 1$, and the independence 
of the quantization errors.

Finally, again from the independence of the quantization errors, we have, 
\begin{align*}
& \ex \| \cA{\ell}\tp d_{\ell}^{(s_{t})}  + \textstyle \frac{1}{N}\sum_{i=1}^N  \cA{i}\tp \bni{s} - \cA{\ell}\tp b_{\ell}^{(s)} \|^2 \\
& \textstyle  \leq \ex \| \cA{\ell}\tp d_{\ell}^{(s_{t})} \|^2 + \ex \| \frac{1}{N} \sum_{i=1}^N  \cA{i}\tp \bni{s} \|^2 - \cA{\ell}\tp b_{\ell}^{(s)} \|^2 \\
&\textstyle  \leq \ex \| d_{\ell}^{(s_{t})} \|^2 +2 \ex \|b_{\ell}^{(s)} \|^2 + \frac{2}{N^2} \sum_{i=1}^N \ex \| \bni{s} \|^2.
\end{align*}
Combining these bounds, we obtain the desired result, 
\begin{align*}
\ex \| \ec{s_t} \|^2 &\leq 2 \Lmax^2 \sum_{j \in \cN{\ell}} \ex \| c_j^{({s_t})} \|^2 + 2 \Lmax^2  \sum_{j \in \cN{\ell}} \ex \| a_j^{(s)} \|^2 \\
& + \ex \| d_{\ell}^{(s_{t})} \|^2 +2 \ex \|b_{\ell}^{(s)} \|^2 + \frac{2}{N^2} \sum_{i=1}^N \ex \| \bni{s} \|^2.
\end{align*}
\end{proof}

We next show that, if all of the values fall within their respective quantization intervals, 
then the error term $\Gamma^{(s)}$  decreases linearly with rate $\kappa$, and thus the algorithm converges to the optimal solution linearly with rate $\kappa$.
\begin{theorem} \label{quantconverge.thm}
Given $p$, if for all $1 \leq s \leq (p-1)$, 
the values  of $\xt{s}_i$, $\gradi{s}$, $\xs{t}$, and $\gradi{s_t}$
fall inside of the respective quantization intervals $\Qai$, $\Qbi$, $\Qci$, and $\Qdi$, 
then $\Gamma^{(k)} \leq C \kappa^{k}$, where, 
\[
C = \frac{D T \overline{m}}{12 (2^{\ell} - 1)^2} \left( 2 \Lmax^2 (C_a + C_b) + 2 \textstyle (\frac{N+1}{N}) C_b + C_d\right),
\]
with $D = \max_i | \cN{i} |$ and $\overline{m} = \max_i m_i$.

It follows that, for $\alpha < \kappa < 1$, 
\begin{align*}
&\expec{G(\xta{s}) - G(\xopt)} \\
&~~~~~ \leq \kappa^s \left(G(\xt{0}) - G(x^\star) + \beta C \left( \frac{1}{1 - \frac{\alpha}{\kappa}} \right) \right)
\end{align*}
\end{theorem}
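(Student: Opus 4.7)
The strategy has two stages. First, I bound $\Gamma^{(k)} = \sum_{t=0}^{T-1}\ex\|\ec{k_t}\|^2$ by combining the dither variance from Theorem~\ref{quantizer.thm} with the per-iteration error bound provided by the preceding lemma. Second, I insert the resulting geometric decay into Theorem~\ref{centralized.thm} and evaluate the attendant series.

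For the first stage, the hypothesis that $\xt{s}_i$, $\gradi{s}$, $\xs{t}$, and $\gradi{s_t}$ lie within their quantization intervals is exactly what activates Theorem~\ref{quantizer.thm}. Applied componentwise, an $n$-bit subtractively dithered quantizer of width $U$ produces scalar errors with variance $U^2/(12(2^n-1)^2)$, and distinct scalar errors are uncorrelated. Summing over the $m_j$ components of each vector error and inserting the designed schedule $\Uas{s}=C_a\kappa^{(s+1)/2}$ (and the analogous schedules for $b$, $c$, $d$) gives, for example,
\[
\ex\|a_j^{(s)}\|^2 \leq \frac{m_j C_a^2\, \kappa^{s+1}}{12(2^n-1)^2},
\]
with identical bounds for $b_j^{(s)}$, $c_j^{(s_t)}$, and $d_\ell^{(s_t)}$. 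Substituting these into the lemma's five-term bound on $\ex\|\ec{s_t}\|^2$, invoking $m_j\leq\overline{m}$ and $|\cN{\ell}|\leq D$ on the sums indexed over $\cN{\ell}$, and then summing the $T$ inner iterates factors out a common $\kappa^{s+1}/(12(2^n-1)^2)$, leaving a prefactor that collects into the stated constant $C$. Thus $\Gamma^{(k)} \leq C\kappa^k$.

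For the second stage, substitute $\Gamma^{(i)} \leq C\kappa^i$ into Theorem~\ref{centralized.thm} and rewrite
\[
\alpha^s \sum_{i=1}^s \alpha^{-i}\Gamma^{(i)} \leq C\alpha^s \sum_{i=1}^s \Bigl(\tfrac{\kappa}{\alpha}\Bigr)^{\!i} = C\kappa^s \sum_{j=0}^{s-1}\Bigl(\tfrac{\alpha}{\kappa}\Bigr)^{\!j}.
\]
Because $\alpha<\kappa<1$, the right-hand series is bounded by $1/(1-\alpha/\kappa)$. Combining with $\alpha^s \leq \kappa^s$ on the initial-condition term $G(\xt{0})-G(\xopt)$ reproduces exactly the claimed inequality for $\ex[G(\xta{s}) - G(\xopt)]$.

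The main obstacle is the first stage: the bookkeeping must keep the five distinct quantization-error contributions separate and verify that each pairs cleanly with its matching interval schedule so that the factor $\kappa^{s+1}$ emerges uniformly across all terms before they are collected into $C$. The second stage is then a routine geometric-series manipulation that relies crucially on the separation $\alpha<\kappa$ to ensure summability.
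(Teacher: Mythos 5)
Your proposal follows essentially the same route as the paper: bound each quantization-error variance via Theorem~\ref{quantizer.thm} and the geometric interval schedule, substitute into the lemma's five-term bound on $\ex\|\ec{s_t}\|^2$, sum over the $T$ inner iterations to get $\Gamma^{(k)} \leq C\kappa^k$, and then fold this into Theorem~\ref{centralized.thm} with the geometric series bounded by $1/(1-\alpha/\kappa)$. If anything, your bookkeeping is slightly more careful than the paper's (you correctly carry $C_a^2\kappa^{s+1}$ where the paper writes $C_a\kappa^s$, and your derivation makes clear the constant should involve $C_c$ rather than the $C_b$ appearing in the theorem statement), but the argument is the same.
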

\begin{proof}
First we note that, by Theorem~\ref{quantizer.thm} and the update rule for the quantization intervals, we have:
\begin{align*}
\ex{\|\ani{s} \|^2} & \leq  \textstyle \frac{\overline{m}}{12} \left(\frac{ \Uas{s}}{2^\ell  -1}\right)^2   \leq  \frac{\overline{m}}{12(2^\ell  -1)^2} C_a \kappa^s  \\
\ex{\|\bni{s} \|^2} & \leq \textstyle \frac{D \overline{m}}{12} \left(\frac{ \Ubs{s}}{2^\ell  -1}\right)^2 \leq  \frac{D\overline{m}}{12(2^\ell  -1)^2} C_b \kappa^s  \\
\ex{\|\cni{s_t} \|^2}  & \leq \textstyle \frac{\overline{m}}{12}\left(\frac{ \Ucs{s}}{2^\ell  -1}\right)^2     \leq \frac{\overline{m}}{12(2^\ell  -1)^2} C_c \kappa^s \\
\ex{\|\dni{s_t} \|^2} & \leq \textstyle \frac{D \overline{m}}{12} \left(\frac{ \Uds{s}}{2^\ell  -1}\right)^2 \leq \frac{D\overline{m}}{12(2^\ell  -1)^2} C_d \kappa^s .
\end{align*}
We use these inequalities to bound $\| \ec{s_t} \|^2$,
\begin{align*}
&\ex \| \ec{s_t} \|^2 \leq 2 \textstyle \Lmax^2 D \left( \frac{\overline{m}}{12(2^\ell  -1)^2} C_c \kappa^s \right) \\
& + \textstyle2  \left( \Lmax^2 D \frac{\overline{m}}{12(2^\ell  -1)^2} C_a \kappa^s \right)  + \frac{D\overline{m}}{12(2^\ell  -1)^2} C_d \kappa^s \\
& + \textstyle2 \left(\frac{D\overline{m}}{12(2^\ell  -1)^2} C_b \kappa^s\right)  + \frac{2}{N} \left( \frac{D\overline{m}}{12(2^\ell  -1)^2} C_b \kappa^s\right) \\
&= \textstyle\frac{D \overline{m}}{12 (2^{\ell} - 1)^2} \left( 2 \Lmax^2 (C_a + C_c) + 2 \textstyle (\frac{N+1}{N}) C_b + C_d\right) \kappa^s.
\end{align*}
Summing over $t=0, \ldots, T-1$, we obtain,
\[
\Gamma^{(s)} = \sum_{t=0}^{T-1} \ex \| \ec{s_t} \|^2 \leq C \kappa^s,
\]
where 
\[
C = \textstyle\frac{D T \overline{m}}{12 (2^{\ell} - 1)^2} \left( 2 \Lmax^2 (C_a + C_c) + 2 \textstyle (\frac{N+1}{N}) C_b + C_d\right)
\]
Applying Theorem~\ref{centralized.thm}, with $\kappa > \alpha$, we have 
\begin{align*}
&\expec{G(\xta{s}) - G(\xopt)} \\
&~~~~~ \leq \alpha^s \left(G(\xt{0}) - G(\xopt)  \right) + \beta \sum^{s}_{i=1} \alpha^{s-i} C \kappa^i \\
&~~~~~ \leq \kappa^s \left(G(\xt{0}) - G(\xopt) + C \beta \sum^{s}_{i=1} \kappa^{-(s-i)} \alpha^{s-i} \right) \\
&~~~~~ \leq \textstyle\kappa^s \left(G(\xt{0}) - G(\xopt) + C \beta \frac{1 - (\frac{\alpha}{\kappa})^s}{1 - \frac{\alpha}{\kappa} } \right) \\
&~~~~~ \leq \textstyle\kappa^s \left(G(\xt{0}) - G(\xopt) + C \beta \left( \frac{1}{1 - \frac{\alpha}{\kappa} }\right) \right).
\end{align*}
\end{proof}

While we do not yet have theoretical guarantees that all values will fall within their quantization intervals,
our simulations indicate that is always possible to find parameters $C_a$, $C_b$, $C_c$, and $C_d$, for which all values lie within their quantization intervals for all iterations.
Thus, in practice, our algorithm achieves a linear convergence rate.
We anticipate that it is possible to develop a programmatic approach, similar to that in~\cite{PY2015a}, to identify values for $C_a$, $C_b$, $C_c$, and $C_d$ that guarantee linear convergence.  
This is a subject of current work.

\section{Numerical Example} \label{results.sec}
This section illustrates the performance of Algorithm \ref{SPGdist.alg} by solving a distributed linear regression problem with elastic net regularization.

We randomly generate a $d$-regular graph with $N = 40$ and uniform degree of 8, i.e., $\forall i~|\cN{i}| = 9$.
We set each subsystem size, $m_i$, to be 10.   Each node has a local function $f_i(x_{\cN{i}}) = \norm{H_i x_{\cN{i}} - h_i}^2$ where $H_i$ is a $80 \times 90$ random matrix.
We generate $h_i$ by first generating a random vector $x$ and then computing $h_i = H_i x$.
The global objective function is:
\[
G(x) = \frac{1}{N}\sum_i^N f_i(x_{\cN{i}}) + \lambda_1 \| x \|_2 + \frac{\lambda_2}{2} \| x \|_1.
\]
 
This simulation was implemented in Matlab and the optimal value $x^{\star}$ was computed using CVX. 
We set the total number of inner iterations to be  $T = 2  N$ and use the step size $\eta = 0.1 / \Lmax$.  With these values, $\alpha < 1$, as required by Theorem~\ref{centralized.thm}.
We set $\kappa = 0.97$, which ensures that $\kappa > \alpha$. 
We use the  quantization parameters $C_a = 50, ~C_b = 300, ~C_c = 50, ~C_d = 400$.   With these parameters, the algorithms values always fell within their quantization intervals.

\begin{figure}[t]
\centering
\includegraphics[scale=.5]{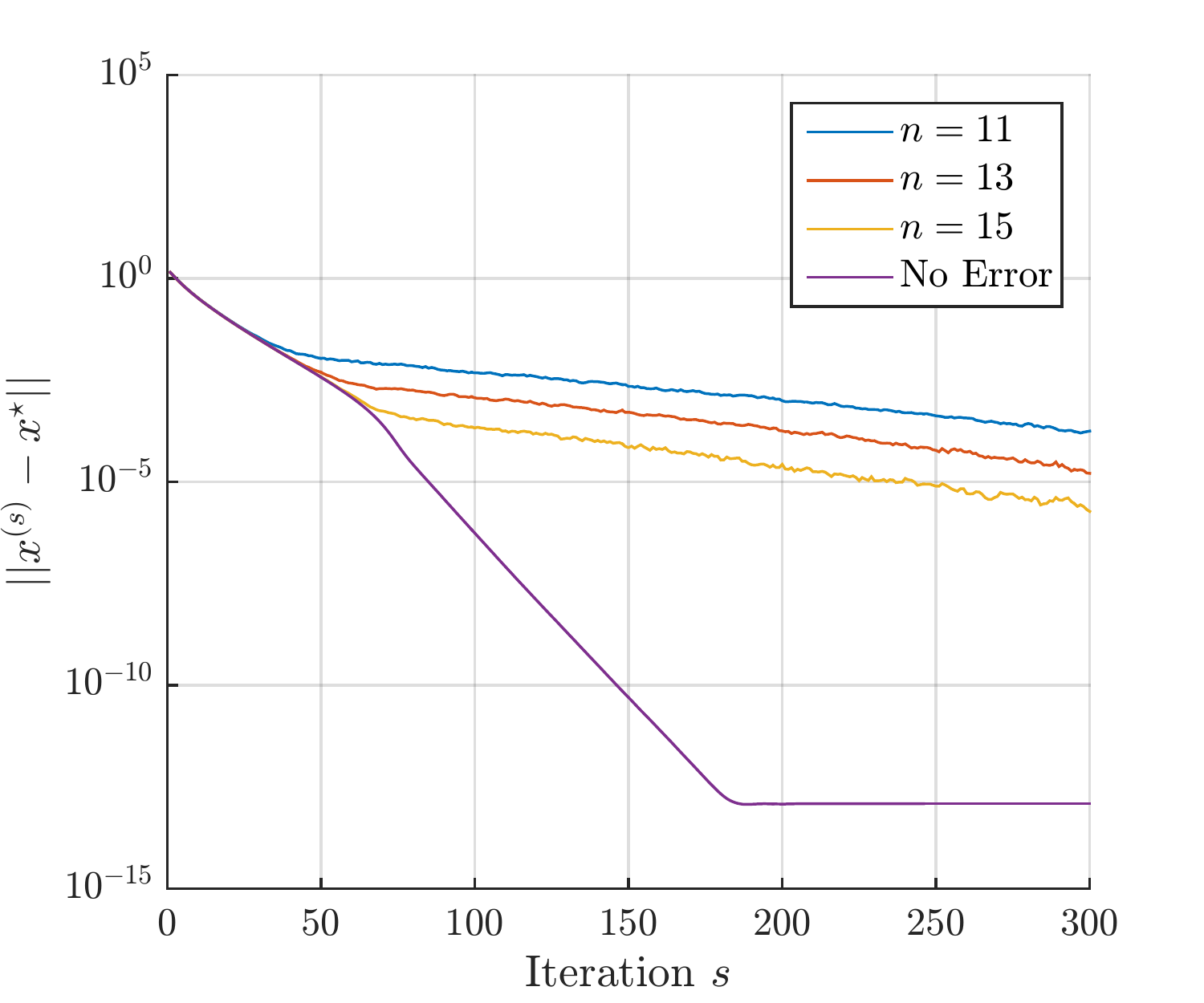}
\caption{Comparison of the performance of Algorithm \ref{SPGdist.alg} with differing quantized message lengths and that with no quantization applied.}  \label{errCompare.fig}
\end{figure}

 Fig.~\ref{errCompare.fig} shows the performance of the algorithm where the number of bits  $n$ is 11, 13, and 15, as well as the performance of the algorithm without quantization.  In these results, $x^{(s)}$ is the concatenation of the $\xt{s}_{i}$ vectors, for $i=1 \ldots N$.  
 It is important to note the rate of convergence of the algorithm in all four cases is linear, and, performance improves as the number of bits increases.

\section{Conclusion} \label{conclusion.sec}
We have presented a distributed algorithm for regularized regression in 
communication-constrained networks.
This algorithm is based on recently proposed semi-stochastic proximal gradient methods.  Our algorithm reduces communication requirements by
(1) using a stochastic approach where only a subset of nodes communicate in each iteration and (2) quantizing all messages.
We have shown that this distributed algorithm is equivalent to a centralized version with inexact gradient computations, and we have used this equivalence
to analyze the convergence rate of the distributed method.
Finally, we have demonstrated the performance of our algorithm in numerical simulations.

In future work, we plan to extend our theoretical analysis to develop a programmatic way to identify initial quantization intervals. We also plan to explore 
the integration of more complex regularization functions.

\appendix

\subsection*{Proof of Theorem~\ref{centralized.thm}}\label{thm1.app}

We first restate  some useful results from~\cite{xiao2014proximal}.
\begin{lemma} \label{useful1.lem}
Let 
\[
\wc{s_t}= \nabla f_{\ell}(\xc{s_{t-1}}) - \nabla f_{\ell}(\xta{s}) + \nabla F(\xta{s}).  
\]
Then, conditioned on $\xc{s_{t-1}}$, ${\expec{ \wc{s_t}} = \nabla F(\xc{s_{t-1}})}$ and 
\begin{align*}
&\ex \| \wc{s_t} - \nabla F(\xc{s_{t-1}}) \|^2 \\
&~~~~~ \leq 4 L (G(\xc{s_{t-1}}) - G(\xopt) + G(\xta{s}) - G(\xopt)),
\end{align*}
\end{lemma}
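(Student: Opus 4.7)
The plan is to prove the two assertions separately, both exploiting the uniform random choice of $\ell \in \{1,\ldots,N\}$ and the finite-sum structure $F = \frac{1}{N}\sum_i f_i$. For the unbiasedness claim, I would simply take the conditional expectation of $\wc{s_t}$ with respect to $\ell$ given $\xc{s_{t-1}}$: since $\ex_\ell \del f_\ell(x) = \del F(x)$ for any deterministic $x$, the terms $\del f_\ell(\xta{s})$ and $\del F(\xta{s})$ cancel in expectation, leaving $\expec{\wc{s_t} \mid \xc{s_{t-1}}} = \del F(\xc{s_{t-1}})$. Note that here $\xta{s}$ is frozen (set at the top of the outer loop), which is what makes this cancellation clean.

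For the variance bound, I would set $u_\ell \eqdef \del f_\ell(\xc{s_{t-1}}) - \del f_\ell(\xta{s})$ so that $\wc{s_t} - \del F(\xc{s_{t-1}}) = u_\ell - \ex_\ell u_\ell$, and then use the elementary identity $\ex \norm{u_\ell - \ex u_\ell}^2 \leq \ex \norm{u_\ell}^2$ to drop the mean. Inserting $\pm\, \del f_\ell(\xopt)$ into $u_\ell$ and applying $\norm{a+b}^2 \leq 2\norm{a}^2 + 2\norm{b}^2$ reduces the task to bounding $\ex_\ell \norm{\del f_\ell(x) - \del f_\ell(\xopt)}^2$ at the two points $x = \xc{s_{t-1}}$ and $x = \xta{s}$.

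The main step is converting each such bound into a $G$-suboptimality gap. Because each $f_i$ is convex and $L_i$-smooth with $L_i \leq \Lmax$, the standard co-coercivity inequality yields $\norm{\del f_i(x) - \del f_i(\xopt)}^2 \leq 2 \Lmax \bigl(f_i(x) - f_i(\xopt) - \del f_i(\xopt)\tp(x - \xopt)\bigr)$. Averaging over $i$ gives ${2\Lmax \bigl(F(x) - F(\xopt) - \del F(\xopt)\tp(x - \xopt)\bigr)}$. The key observation is that the composite first-order optimality condition for $\xopt$ yields $-\del F(\xopt) \in \partial R(\xopt)$, so by convexity of $R$ (Assumption~\ref{R.assum}), $-\del F(\xopt)\tp(x - \xopt) \leq R(x) - R(\xopt)$ for every $x \in \text{dom}(R)$. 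Adding this to the $F$-gap collapses everything into $2\Lmax \bigl(G(x) - G(\xopt)\bigr)$.

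Combining the two instances of $x$ with the factor $2$ from the earlier splitting step then produces exactly $4\Lmax\bigl(G(\xc{s_{t-1}}) - G(\xopt) + G(\xta{s}) - G(\xopt)\bigr)$, as claimed. The main obstacle is the $F$-to-$G$ conversion, since $\del F(\xopt) \neq 0$ in general here; this is precisely where the subdifferential calculus of the composite objective enters, and it must be invoked rather than the more familiar stationarity $\del F(\xopt) = 0$. Everything else is a routine chain of variance and smoothness manipulations in the spirit of Xiao and Zhang's original Prox-SVRG analysis.
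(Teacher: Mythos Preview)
Your argument is correct and is exactly the proof given in Xiao and Zhang~\cite{xiao2014proximal}; the present paper does not prove this lemma at all but simply restates it from that reference. One small caveat: the co-coercivity step you invoke requires each $f_i$ to be convex, which Xiao and Zhang assume explicitly but which is not stated in Assumption~\ref{fi.assum} here---you should flag that this is implicitly needed (in the distributed setting of the paper it follows from Assumption~\ref{fex.assum}, since linear gradients force the $f_i$ to be quadratic).
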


\begin{lemma} \label{useful2.lem}
Let $G(x) = F(x) + R(x)$, where $G$ is strongly convex, $\del F(x)$ is Lipschitz continuous with parameter $L$.  Further let $F(x)$ and $R(x)$ have convexity parameters $\mu_F$ and $\mu_R$, respectively.  In other words, if $F(x)$ ($R(x)$) is strongly convex, then
$\mu_F$ ($\mu_R$) is its strong convexity parameter; if $F(x)$ ($R(x)$) is only convex, then $\mu_F$ ($\mu_R$) is 0.
For any $x \in \text{dom}(R)$ and any $v \in \mR^P$, define,
\begin{align*}
\xplus &= \prox{x - \eta v} \\
h &= \textstyle \frac{1}{\eta}(x - \xplus) \\
\Delta &= v - \nabla F(x),
\end{align*}
where $0 < \eta < \frac{1}{L}$.  Then, for any $y \in \mR^{P}$,
\begin{align*}
G(y) &\geq  \textstyle G(\xplus) + h\tp (y-x) + \frac{\eta}{2} \|h \|^2 + \frac{\mu_F}{2} \|y - x \|^2 \\
&~~~~~ + \textstyle \frac{\mu_R}{2} \| y - \xplus \|^2 + \Delta\tp (\xplus - y).
\end{align*}
\end{lemma}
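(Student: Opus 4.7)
The plan is to derive the inequality from three standard ingredients: the first-order optimality condition for $\xplus$, which identifies an explicit subgradient of $R$ at $\xplus$; the (strong) convexity inequalities for $F$ at $x$ and for $R$ at $\xplus$; and the descent lemma for $F$, which is where the step-size condition $\eta < 1/L$ will ultimately enter.

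First, I would unfold the definition $\xplus = \prox{x - \eta v}$: the first-order optimality condition for the proximal minimization yields $\frac{1}{\eta}(x - \xplus) - v \in \partial R(\xplus)$, which in terms of the given shorthands reads $h - \nabla F(x) - \Delta \in \partial R(\xplus)$. This gives me a concrete subgradient to plug into the $R$-convexity inequality, and is the only place in the proof where the specific form of the proximal update is used.

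Next, I would write down three inequalities evaluated at a test point $y$: (i) strong convexity of $R$ at $\xplus$ using the subgradient identified above, giving $R(y) \geq R(\xplus) + (h - \nabla F(x) - \Delta)\tp(y - \xplus) + \frac{\mu_R}{2}\|y - \xplus\|^2$; (ii) (strong) convexity of $F$ at $x$, giving $F(y) \geq F(x) + \nabla F(x)\tp(y - x) + \frac{\mu_F}{2}\|y - x\|^2$; and (iii) the descent lemma, rearranged as $F(x) \geq F(\xplus) - \nabla F(x)\tp(\xplus - x) - \frac{L}{2}\|\xplus - x\|^2$. Summing (i) and (ii) lower-bounds $G(y) = F(y) + R(y)$ in terms of $F(x) + R(\xplus)$, and then substituting (iii) for $F(x)$ produces $G(\xplus)$ on the right-hand side at the price of a single $-\frac{L}{2}\|\xplus - x\|^2$ term.

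The remaining step is algebraic: use $x - \xplus = \eta h$ and $y - \xplus = (y - x) + \eta h$. The three $\nabla F(x)$ contributions cancel identically since their $y$- and $\xplus$-coefficients sum to zero; the cross term $h\tp(y - \xplus)$ splits as $h\tp(y - x) + \eta\|h\|^2$; the penalty becomes $\frac{L\eta^2}{2}\|h\|^2$; and $-\Delta\tp(y - \xplus)$ becomes $\Delta\tp(\xplus - y)$. Collecting the $\|h\|^2$ coefficients yields $\eta(1 - L\eta/2)\|h\|^2$, and the hypothesis $\eta < 1/L$ forces $1 - L\eta/2 > 1/2$, bounding this below by $\frac{\eta}{2}\|h\|^2$; all other pieces already match the claim. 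The computation is mostly bookkeeping, and the only real subtlety is ensuring the subgradient picked in step (i) is exactly the one that makes the $\nabla F(x)$ contributions cancel and the $\Delta\tp(\xplus - y)$ term emerge with the correct sign; the step-size condition is invoked only at the final collection of $\|h\|^2$ coefficients.
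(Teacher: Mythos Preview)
Your argument is correct: the subgradient $h-\nabla F(x)-\Delta\in\partial R(\xplus)$ obtained from the proximal optimality condition, combined with the two convexity inequalities and the descent lemma, yields exactly the claimed bound after the cancellations and the use of $\eta<1/L$ to absorb $\eta(1-L\eta/2)\|h\|^2\ge \tfrac{\eta}{2}\|h\|^2$.

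Note, however, that the paper does not prove this lemma at all; it merely restates it from \cite{xiao2014proximal} (Xiao and Zhang), where it appears as Lemma~3.7. Your derivation is precisely the standard proof given there, so there is no methodological divergence---you have simply filled in what the paper chose to cite rather than reproduce.
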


We now proceed to prove Theorem~\ref{centralized.thm}.  For brevity, we omit some details that are identical to those in the 
proof of Theorem 3.1 in \cite{xiao2014proximal}.  We have indicated these omissions below.

\begin{proof}
First, we define 
\begin{align*}
\hc{s_t} &= \textstyle \frac{1}{\eta}(\xc{s_{t-1}} - \xc{s_{t}}) \\
&= \textstyle \frac{1}{\eta}(\xc{s_{t-1}} - \prox{\xc{s_{t-1}} - \eta \vc{s_{t-1}}}),
\end{align*}
where $\vc{s_{t-1}}$ is as defined in Algorithm~\ref{centStochastic.alg}.

We analyze the change in the distance between $\xc{s_t}$ and $\xopt$ in a single inner iteration,
\begin{align*}
& \| \xc{s_t} - \xopt \|^2 = \| \xc{s_{t-1}} - \eta \hc{s_t} - \xopt \|^2 \\
&=\| \xc{s_{t-1}} \xopt \|^2  - 2 \eta {\hc{s_t}} \tp(\xc{s_{t-1}} - \xopt) + \eta^2 \| \hc{s_t} \|^2.
\end{align*}
We next apply Lemma~\ref{useful2.lem}, with $x = \xc{s_{t-1}}$, $\xplus = \xc{s_t}$, $h = \hc{s_t}$, $v = \vc{s_{t-1}}$, and $y= \xopt$, to obtain,
\begin{align*}
&- \textstyle {\hc{s_t}} \tp(\xc{s_{t-1}} - \xopt) + \frac{\eta}{2} \| \hc{s_t} \|^2  \\
&~~~~~~~~~~~~\leq \textstyle G(\xopt) - G(\xc{s_t}) - \frac{\mu_F}{2} \| \xc{s_{t-1}} - \xopt \|^2   \\
&~~~~~~~~~~~~~~~-\textstyle \frac{\mu_R}{2} \| \xc{s_t} - \xopt \|^2 - {\Delta^{(s_{t})}} \tp (\xc{s_t} - \xopt),
\end{align*}
where $\Delta^{(s_{t})} = \vc{s_{t-1}} - \del F(\xc{s_{t-1}}) = \wc{s_{t-1}} + \ec{s_{t-1}} - \del F(\xc{s_{t-1}})$.
This implies,
\begin{align*}
\| \xc{s_t} - \xopt \|^2 &\leq \| \xc{s_{t-1}} - \xopt \|^2 - 2 \eta ( G( \xc{s_t}) - G(\xopt)) \\
&~~~~~- 2 \eta {\Delta^{(s_{t})}} \tp (\xc{s_t} - \xopt).
\end{align*}

We follow the same reasoning as in the proof of Theorem 3.1 in \cite{xiao2014proximal} to obtain the following expression,
which is conditioned on $\xc{s_{t-1}}$ and takes expectation with respect to $\ell$,
\begin{align*}
&\ex \| \xc{s_t} - \xopt \|^2 \leq \| \xc{s_{t-1}} - \xopt \|^2  \\
&~~~- 2 \eta \ex({G(\xc{s_t}) - G(\xopt)}) + 2 \eta^2 \ex \|\Delta^{(s_{t})} \|^2 \\
&~~~- 2 \eta \expec{ {\Delta^{(s_{t})}} \tp (\overline{x}^{(s_t)} - \xopt)},
\end{align*}
where 
\[
\overline{x}^{(s_t)} = \prox{\xc{s_{t-1}} - \eta \nabla F (\xc{s_{t-1}})}.
\]

Since  $\ell$ and $\ec{s_{t-1}}$ are independent of $\overline{x}^{(s_t)}$ and $\xopt$, and since $\ec{s_{t-1}}$ is 
zero-mean,
\[
\expec{ {\Delta^{(s_{t})}} \tp (\overline{x}^{(s_t)} - \xopt)} = (\ex {\Delta^{(s_{t})}})\tp (\overline{x}^{(s_t)} - \xopt) = 0.
\]
Further, since  $\ec{s_{t-1}}$ is independent of $\wc{s_{t-1}}$ and $\nabla F(\xc{s_{t-1}})$, 
\[
\ex \|\Delta^{(s_{t})} \|^2  = \ex \| \wc{s_{t-1}} - \nabla F(\xc{s_{t-1}}) \|^2 + \ex \| \ec{s_{t-1}} \|^2
\]
Applying Lemma~\ref{useful1.lem}, we  obtain,
\begin{align*}
&\ex \| \xc{s_t} - \xopt \|^2 \leq \| \xc{s_{t-1}} - \xopt \|^2 - 2 \eta \ex(G(\xc{s_t}) - G(\xopt)) \\
&~~+  8 \Lmax \eta^2 (G(\xc{s_{t-1}}) - G(\xopt) + G(\xta{s}) - G(\xopt)) \\
&~~~+ 2\eta^2 \ex \| \ec{s_{t-1}} \|^2
\end{align*}

We consider a single execution of the inner iteration of the algorithm, so $\xc{s_0} = \xta{s}$ and $\xta{s+1} = \frac{1}{T}\sum_{t=1}^T \xc{s_{t}}$.
Summing over $t=1, \ldots, T$ on both sides gives and taking expectation over $\ell$, for $t=1, \ldots, T$ gives us,
\begin{align*}
& \ex \| \xc{s_T} - \xopt \|^2 + 2 \eta \ex({G(\xc{s_T}) - G(\xopt)})  \\
& + 2 \eta(1 - 4 \Lmax \eta) \sum_{t=1}^{T-1} \ex({G(\xc{s_t}) - G(\xopt)})\\
&~~~~~~ \leq  \|\xc{s_0} - \xopt \|^2 + 8 \Lmax \eta^2(G(\xc{s_0}) - G(\xopt) \\
&~~~~~~~~~+ T(G(\xt{s}) - G(\xopt)) + 2 \eta^2 \Gamma^{(s)}.
\end{align*}

Following the same reasoning as in~\cite{xiao2014proximal}, we obtain,
\begin{align*}
\ex({G(\xta{s+1}) - G(\xopt)}) \leq \alpha \expec{G(\xta{s} - G(\xopt)} + \beta \Gamma^{(s)}. 
\end{align*}
Applying this bound recursively, we obtain the expression in our theorem.
\end{proof}

\bibliographystyle{IEEETran}
\bibliography{quantize,sep}

\begin{thebibliography}{10}
\providecommand{\url}[1]{#1}
\csname url@rmstyle\endcsname
\providecommand{\newblock}{\relax}
\providecommand{\bibinfo}[2]{#2}
\providecommand\BIBentrySTDinterwordspacing{\spaceskip=0pt\relax}
\providecommand\BIBentryALTinterwordstretchfactor{4}
\providecommand\BIBentryALTinterwordspacing{\spaceskip=\fontdimen2\font plus
\BIBentryALTinterwordstretchfactor\fontdimen3\font minus
  \fontdimen4\font\relax}
\providecommand\BIBforeignlanguage[2]{{%
\expandafter\ifx\csname l@#1\endcsname\relax
\typeout{** WARNING: IEEEtran.bst: No hyphenation pattern has been}%
\typeout{** loaded for the language `#1'. Using the pattern for}%
\typeout{** the default language instead.}%
\else
\language=\csname l@#1\endcsname
\fi
#2}}

\bibitem{PY2015a}
Y.~Pu, M.~N. Zeilinger, and C.~N. Jones, ``Quantization design for
  unconstrained distributed optimization,'' in \emph{American Control
  Conference}, July 2015, pp. 1229--1234.

\bibitem{SRB11}
M.~Schmidt, N.~L. Roux, and F.~R. Bach, ``Convergence rates of inexact
  proximal-gradient methods for convex optimization,'' in \emph{Advances in
  Neural Information Processing Systems}, J.~Shawe-Taylor, R.~Zemel,
  P.~Bartlett, F.~Pereira, and K.~Weinberger, Eds., 2011, pp. 1458--1466.

\bibitem{PY2015b}
Y.~Pu, M.~N. Zeilinger, and C.~N. Jones, ``Quantization design for distributed
  optimization,'' \emph{arXiv preprint arXiv:1504.02317}, 2015.

\bibitem{AK2007}
A.~Kashyap, T.~Ba{\c{s}}ar, and R.~Srikant, ``Quantized consensus,''
  \emph{Automatica}, vol.~43, no.~7, pp. 1192--1203, 2007.

\bibitem{DT2013}
D.~Thanou, E.~Kokiopoulou, Y.~Pu, and P.~Frossard, ``Distributed average
  consensus with quantization refinement,'' \emph{IEEE Transactions on Signal
  Processing}, vol.~61, no.~1, pp. 194--205, 2013.

\bibitem{RC2007}
R.~Carli, F.~Fagnani, P.~Frasca, T.~Taylor, and S.~Zampieri, ``Average
  consensus on networks with transmission noise or quantization,'' in
  \emph{Proceedings of European Control Conference}, 2007, pp. 1852--1857.

\bibitem{AN2008}
A.~Nedi{\'c}, A.~Olshevsky, A.~Ozdaglar, and J.~N. Tsitsiklis, ``Distributed
  subgradient methods and quantization effects,'' in \emph{47th IEEE Conference
  on Decision and Control}, 2008, pp. 4177--4184.

\bibitem{NA2014}
A.~Nitanda, ``Stochastic proximal gradient descent with acceleration
  techniques,'' in \emph{Advances in Neural Information Processing Systems},
  2014, pp. 1574--1582.

\bibitem{JR2013}
R.~Johnson and T.~Zhang, ``Accelerating stochastic gradient descent using
  predictive variance reduction,'' in \emph{Advances in Neural Information
  Processing Systems}, 2013, pp. 315--323.

\bibitem{xiao2014proximal}
L.~Xiao and T.~Zhang, ``A proximal stochastic gradient method with progressive
  variance reduction,'' \emph{SIAM Journal on Optimization}, vol.~24, no.~4,
  pp. 2057--2075, 2014.

\bibitem{DS09}
J.~Duchi and Y.~Singer, ``Efficient online and batch learning using forward
  backward splitting,'' \emph{J. Mach. Learn. Res.}, vol.~10, pp. 2873--2898,
  2009.

\bibitem{lipshitz1992quantization}
S.~P. Lipshitz, R.~A. Wannamaker, and J.~Vanderkooy, ``Quantization and dither:
  A theoretical survey,'' \emph{Journal of the Audio Engineering Society},
  vol.~40, no.~5, pp. 355--375, 1992.

\end{thebibliography}

\end{document}